\documentclass[a4paper,12pt]{amsart}
\usepackage[cp1251]{inputenc}
\usepackage{amssymb,upref}
\usepackage{graphicx}
\DeclareGraphicsExtensions{.pdf,.png,.jpg}
\usepackage{chngcntr}
\usepackage{amsthm}
\newtheorem{theorem}{Theorem} 
\newtheorem{theoremalpha}{Theorem}
\newcounter{example} 

\newcounter{alphcount}
\newenvironment{alphthm}
  {\stepcounter{alphcount}\begin{theoremalpha}}
  {\end{theoremalpha}}
\theoremstyle{plain}
\newtheorem{lemma}{Lemma}
\newtheorem{corollary}{Corollary}

\theoremstyle{definition}

\theoremstyle{remark}

\uccode`ґ=`Ґ \uccode`Ґ=`Ґ \lccode`Ґ=`ґ \lccode`ґ=`ґ %
\uccode`є=`Є \uccode`Є=`Є \lccode`Є=`є \lccode`є=`є %
\uccode`і=`І \uccode`І=`І \lccode`І=`і \lccode`і=`і %
\uccode`ї=`Ї \uccode`Ї=`Ї \lccode`Ї=`ї \lccode`ї=`ї %

\begin{document}

\title[Properties of one family of self-similar sets]{Topological, metric and fractal properties of one family of self-similar sets}
\author[D. Karvatskyi]{{D. Karvatskyi}}
\keywords{self-similar set, attractor, iterated function system, Cantorval, fractal}
\maketitle
\begin{abstract}
Depending on a natural parameter $l$, we study the topological, metric, and fractal properties of the homogeneous self-similar set
$$K_{l}=\left\{\sum_{i=1}^{\infty} \frac{\varepsilon_i}{(2l+2)^i} : (\varepsilon_i) \in \{0, 2, 4, \dots, 2l, 2l+1, 2l+3, \dots, 4l+1 \}^{\mathbb{N}} \right\}.$$ In particular, we prove that $K_l$ is a Cantorval, that is, a perfect set on the real line with a non-empty interior and fractal boundary. Additionally, we compute the Lebesgue measure of $K_l$ and the Hausdorff dimension of its boundary.
\end{abstract}

\section{Some results from series theory}

Firstly, we recall some results related to the theory of numerical series. Let us consider a convergent positive series with non-increasing terms
$$\sum_{n=1}^{\infty}{u_n}<+\infty, \ \ \ 0<u_{n+1} \leq u_{n}, \ \ \ \forall n \in \mathbb{N}.$$
In a standard way, we denote $$U_n:=\sum^{\infty}_{i=n+1}u_i$$ for the $n$-th tail of the series.
The \textbf{set of subsums} of $\sum u_n$, also known as the \textbf{achievement set} of a sequence $(u_n)$, is defined as
\begin{equation*}
 \label{incomplit sum}
E(u_n)=\left\{\sum^{\infty }_{n=1}{{\varepsilon }_nu_n}: ~ ~ ~ ({\varepsilon }_n) \in \{0, 1\}^{\mathbb{N}} \right\}.
\end{equation*}

The study of sets of subsums of numerical series was initiated in \cite{kakeya}, where it was established that $E(u_n) \subseteq [0, \sum{u_n}]$ is a perfect set that is symmetric with respect to its midpoint. This object was later rediscovered in \cite{Hornich} and \cite{Menon}. It turns out that the relation between terms and tails of the series plays a crucial role in determining topological nature of $E(u_n)$.  As a consequence of these early papers, we have the following result.

\begin{alphthm}
\label{KHM}
The set of subsums of a convergent positive series $\sum u_n$ with non-increasing terms is:
\begin{enumerate}  
\item\label{thm:KHM.2} a finite union of bounded closed intervals if and only if the inequality $u_{n} \leq U_{n}$
holds for all but finitely many $n$ (a closed bounded interval if and only if $u_n\leq U_n$ for all $n \in \mathbb{N}$);
\item\label{thm:KHM.4} homeomorphic to the classical Cantor set if the inequality $u_{n} > U_{n}$ holds for all but finitely many $n$.
\end{enumerate}
\end{alphthm}
For simplicity, we say that a series $\sum {u_n}$ satisfies \textbf{Kakeya's condition} if either \ref{KHM}\eqref{thm:KHM.2} or \ref{KHM}\eqref{thm:KHM.4} holds. For a long time, it was believed that the set of subsums of an arbitrary convergent series is either a finite union of intervals or a Cantor set. However, it was later discovered that a third mixed type also exists.
In \cite{NS}, a full topological classification of the set of subsums of an arbitrary positive convergent series was given.

\begin{alphthm}
\label{Classification}
The set of subsums of a convergent positive series is either:
\begin{enumerate}
 \item a finite union of closed intervals;
 \item homeomorphic to the Cantor set (or Cantor set for short);
 \item an M-Cantorval, i.e., a set that is homeomorphic to
 $$C \cup \bigcup_{n=1}^{\infty} G_{2n-1} \equiv [0, 1] \setminus \bigcup_{n=1}^{\infty} G_{2n},$$
where $C$ is the Cantor ternary set, $G_n$ is the union of the $2^{n-1}$ open middle thirds, which are removed from $[0, 1]$ at the $n$-th step in the construction of $C$.
\end{enumerate}
\end{alphthm} 

\begin{corollary}
\label{non-Kakeya}
If the series $\sum {u_n}$ does not satisfy Kakeya's condition, then $E(u_n)$ is either a Cantor set or a Cantorval.
\end{corollary}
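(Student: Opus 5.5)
The plan is to read the corollary straight off the trichotomy in Theorem~\ref{Classification}. By that theorem, $E(u_n)$ is a finite union of closed intervals, a Cantor set, or an M-Cantorval. So it suffices to show that when $\sum u_n$ does not satisfy Kakeya's condition, the first alternative cannot occur. This is the only point needing attention.

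For that step I would use Theorem~\ref{KHM}\eqref{thm:KHM.2}, whose characterization is an \emph{if and only if}. It says $E(u_n)$ is a finite union of bounded closed intervals exactly when $u_n \le U_n$ holds for all but finitely many $n$. Suppose $\sum u_n$ does not satisfy Kakeya's condition. By definition, neither \ref{KHM}\eqref{thm:KHM.2} nor \ref{KHM}\eqref{thm:KHM.4} holds. In particular the inequality $u_n\le U_n$ fails for infinitely many $n$. The ``only if'' direction of \ref{KHM}\eqref{thm:KHM.2} then shows that $E(u_n)$ is not a finite union of closed intervals.

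A small check remains. A finite union of closed intervals here should be understood as including degenerate cases, but none arise: the terms are positive, so $E(u_n)$ is infinite and perfect. Hence no degenerate finite set has to be ruled out separately, and the first alternative is fully excluded. The remaining alternatives of Theorem~\ref{Classification} are the Cantor set and the M-Cantorval, which gives the claim.

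There is no real obstacle. The only thing to be careful about is to invoke the equivalence in \ref{KHM}\eqref{thm:KHM.2} rather than the one-sided statement \ref{KHM}\eqref{thm:KHM.4}. Failure of Kakeya's condition does not determine which of the two remaining types occurs, and the corollary does not claim that it does.
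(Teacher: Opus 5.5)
Your proposal is correct and matches the paper's argument, which is left implicit: the corollary is read off from the trichotomy of Theorem~\ref{Classification}. The first alternative is excluded by the contrapositive of the ``only if'' direction in Theorem~\ref{KHM}\eqref{thm:KHM.2}, because failure of Kakeya's condition forces $u_n > U_n$ for infinitely many $n$ (your remark on degenerate intervals is harmless but unnecessary, since both theorems use the same notion of a finite union of closed intervals).
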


A large number of articles are devoted to the search for necessary and sufficient conditions for the set of subsums to be a Cantorval. Despite essential progress for some series, the problem is quite difficult in general frameworks. The most significant results in this direction were obtained for multigeometric series  
$$k_1+k_2+\dots+k_m+k_1q+\dots+k_mq+\dots + k_1q^i+\dots+k_mq^i+\dots,$$
where $k_1, k_2, \dots, k_m$ are fixed positive scalars, and $0<q<1$. In particular, conditions for the set of subsums of multigeometric series to be a Cantorval were established in \cite{Banakh}, \cite{Jones}, \cite{Ferdinands}, \cite{BP17}, \cite{Bartoszewicz}, \cite{Nit15}, with further generalizations provided in \cite{PK23}, \cite{KMV}.
Some conditions for the set of subsums of a non-multigeometric series were found in \cite{VMPS19}. 
We also refer to \cite{GP25}, where numerous open problems related to the topic are discussed.

\section{Self-similar sets on the real line}

Let us now recall some facts from the theory of self-similar sets \cite{BSS} that will be used later on. Let us consider an \textbf{iterated function system (IFS)} that is a finite collection of contractive similarities on the real line $\mathbb{R}$:
$$\Phi=\{ f_i(x)=r_i \cdot x + t_i\}_{i=1}^{N},$$
where $0<r_i<1$ and $t_i \in \mathbb{R}$ for all $1 \leq i \leq N$. In case $r_i=r$ for all $1 \leq i \leq N$, the IFS is called \textbf{homogeneous} or \textbf{equicontractive}.

According to the classical result of \cite{Hutchinson}, for any $\Phi$, there exists a unique non-empty compact set $K \subset \mathbb{R}$ such that
$$K=\bigcup_{i=1}^{N} f_i(K),$$
which is called the \textbf{attractor} of $\Phi$ or the \textbf{self-similar set} associated with $\Phi$. A self-similar set consists of a finite union of smaller copies of itself that may overlap or be disjoint.

For an IFS $\Phi=\{f_i(x)\}_{i=1}^{N}$, we denote the symbolic space by $\Sigma:=\{ 1, 2, \dots, N\}^{\mathbb{N}}$. The elements of the symbolic space are words, i.e., infinite sequences whose elements belong to the set $\{1, 2, \dots, N\}$. For each word $\sigma \in \Sigma$, one can define a mapping of the form
$$\pi(\sigma):=\bigcap_{n=1}^{\infty} f_{\sigma_1 \dots \sigma_n} (K) = \lim_{n \to \infty} f_{\sigma_1 \dots \sigma_n} (0),$$
which is called the \textbf{natural projection} and maps elements of the symbolic space to points of the attractor. Then $K$ can be written as
$$K=\bigcup_{\sigma \in \Sigma} \pi (\sigma).$$

An IFS $\Phi=\{ f_i(x)\}$ is said to satisfy the \textbf{open set condition} (OSC) if there exists a non-empty bounded open set $V$ such that:
\begin{enumerate}
    \item the sets $f_i(V)$ for \( i = 1, 2, \ldots, N \) are pairwise disjoint;
    \item \( f_i(V) \subseteq V \) for all \( i \).
\end{enumerate}
Fulfillment of the OSC ensures that the self-similar set generated by $\Phi$ consists of self-similar copies that are pairwise disjoint or only touch each other.

A positive number $s$ satisfying the equation
$$r_1^s+r_2^s+\dots +r_N^s=1$$
is called the \textbf{similarity dimension} of the IFS $\Phi$ and of the associated self-similar set $K$. The similarity dimension serves as a natural upper bound for the Hausdorff dimension of $K$. Therefore, if for a self-similar set on the real line $s<1$, then it is a totally disconnected set -- its only connected components are singletons.
Due to Hutchinson \cite{Hutchinson}, if an IFS satisfies the OSC, then
$$0<H^{s}(K)<\infty,$$
where $H^{s}(K)$ denotes the $s$-dimensional Hausdorff measure. As a consequence, the similarity dimension and the Hausdorff dimension of $K$ coincide, i.e., $\dim_{H}(K)=s.$

The following important result from \cite{Schief} establishes a connection between the Lebesgue measure and the topological structure of the attractor with integer similarity dimension.

\begin{alphthm}
\label{T:Schief}
Let $\Phi$ be an IFS on $\mathbb{R}$ with attractor $K$ and similarity dimension $1$. If $K$ has positive Lebesgue measure, then $K$ has a nonempty interior and satisfies the OSC.
\end{alphthm}

\section{One family of Cantorvals}

In this section, we compute the Lebesgue measure and the Hausdorff dimension of the boundary of Cantorvals belonging to a certain countable one-parameter family. To date, the metric and fractal properties of Cantorvals have been established only for a very limited number of examples. In particular, the Lebesgue measure of Cantorvals from another family was calculated in \cite{Banakiewicz}, while the Hausdorff dimension of their boundaries was obtained in \cite{KP25}.

Depending on the parameter $l \in \mathbb{N}$, we consider a homogeneous self-similar set
$$K_{l}=\left\{\sum_{i=1}^{\infty} \frac{\varepsilon_i}{(2l+2)^i} : (\varepsilon_i) \in T_{l}^{\mathbb{N}} = \{0, 2, 4, \dots, 2l, 2l+1, 2l+3, \dots, 4l+1 \}^{\mathbb{N}} \right\}$$
generated by an IFS $\Psi_l=\left\{ \psi_i(x)\right\}_{i=1}^{2l+2}$ defined as
\begin{enumerate}
\item $\psi_i(x)=\frac{x+(2i-2)}{2l+2}$ for $1 \leq i \leq l+1$;
\item $\psi_i(x)=\frac{x+(2i-3)}{2l+2}$ for $l+2 \leq i \leq 2l+2$.
\end{enumerate}
On the other hand, $K_l$ is the set of subsums of a multigeometric series
$$\sum_{n=1}^{\infty}z_n=\frac{2l+1}{2l+2}+\underbrace{\frac{2}{2l+2}+\dots+\frac{2}{2l+2}}_l+\dots+\frac{2l+1}{(2l+2)^i}+\underbrace{\frac{2}{(2l+2)^i}+\dots+\frac{2}{(2l+2)^i}}_l+\dots$$
In order to reveal the structure of $K_l$, we rely on its dual nature: it is both self-similar and achievable -- it is the set of subsums of $\sum z_n$. Thus, we can leverage some results described in Sections 1 and 2.

\begin{lemma}
\label{Basic properties}
The following properties of $\Psi_l$ and its attractor are easy to verify:
\begin{enumerate}

\item $H=\left[ 0, \frac{4l+1}{2l+1}\right]$ serves as the convex hull of $K_l$;

\item $K_l$ is symmetric with respect to its midpoint;

\item $\min \psi_i(H)=\frac{2i-2}{2l+2}$ for $1 \leq i \leq l+1$;\\
$\min \psi_i(H)=\frac{2i-3}{2l+2}$ for $l+2 \leq i \leq 2l+2$;

\item $\max \psi_i(H)=\frac{4il+2i-1}{(2l+1)(2l+2)}$ for $1 \leq i \leq l+1$;\\
$\max \psi_i(H)=\frac{4il-2l+2i-2}{(2l+1)(2l+2)}$ for $l+2 \leq i \leq 2l+2$.

\end{enumerate}
\end{lemma}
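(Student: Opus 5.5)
We verify the four items directly from the digit representation of $K_l$ and from the explicit form of the maps $\psi_i$.

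\textbf{Convex hull.} Every point of $K_l$ has the form $\sum_{i\ge1}\varepsilon_i(2l+2)^{-i}$ with $0\le \varepsilon_i\le 4l+1$. Hence
$$0\le x\le (4l+1)\sum_{i\ge 1}(2l+2)^{-i}=\frac{4l+1}{2l+1}.$$
Both bounds are attained: take all digits $0$, or all digits $4l+1$, since both digits lie in $T_l$. A compact set containing its infimum and supremum has convex hull $[\min K_l,\max K_l]$, so the convex hull is $H$.

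\textbf{Symmetry.} We check that the digit set is symmetric under $\varepsilon\mapsto 4l+1-\varepsilon$. This map sends the even digits $0,2,\dots,2l$ to the odd digits $4l+1,4l-1,\dots,2l+1$, and conversely, so it maps $T_l$ onto itself. Applying it coordinatewise sends $x=\sum\varepsilon_i(2l+2)^{-i}$ to
$$\sum(4l+1-\varepsilon_i)(2l+2)^{-i}=\frac{4l+1}{2l+1}-x.$$
Therefore $K_l$ is invariant under reflection about the midpoint $\frac{4l+1}{2(2l+1)}$. Equivalently, the IFS is invariant under conjugation by this reflection, with $\psi_i\leftrightarrow\psi_{2l+3-i}$.

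\textbf{Endpoints of $\psi_i(H)$.} Each $\psi_i$ is increasing, so $\min\psi_i(H)=\psi_i(0)$ and $\max\psi_i(H)=\psi_i\!\left(\frac{4l+1}{2l+1}\right)$.

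For $1\le i\le l+1$, we get $\psi_i(0)=\frac{2i-2}{2l+2}$ and
$$\max\psi_i(H)=\frac{(4l+1)+(2i-2)(2l+1)}{(2l+1)(2l+2)}.$$
The numerator expands to $4il+2i-1$.

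For $l+2\le i\le 2l+2$, we get $\psi_i(0)=\frac{2i-3}{2l+2}$ and
$$\max\psi_i(H)=\frac{(4l+1)+(2i-3)(2l+1)}{(2l+1)(2l+2)}.$$
The numerator expands to $4il-2l+2i-2$.

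There is no real obstacle in this lemma. The only points needing care are confirming that the reflection really preserves $T_l$, and doing the numerator arithmetic correctly.
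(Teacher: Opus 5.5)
Your verification is correct and is exactly the routine check the paper leaves to the reader; the paper gives no proof of this lemma, and your convex-hull bounds, endpoint formulas and numerator expansions all check out. The only difference is in the symmetry: the paper (in the proof of Lemma~\ref{LM1}) attributes it to the general symmetry of sets of subsums of $\sum z_n$, while you use the invariance of $T_l$ under $\varepsilon\mapsto 4l+1-\varepsilon$, which is the same fact at the digit level.
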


\begin{lemma}
\label{LM1}
The interval $I=\left[ \frac{2l}{2l+1}, 1\right]$ is entirely contained in $K_l$.
\end{lemma}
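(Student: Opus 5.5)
The plan is to use the self-similarity of $K_l$ rather than its description as a set of subsums. I will use a standard criterion: \emph{if $J\subset\mathbb R$ is a nonempty compact set with $J\subseteq\bigcup_{i=1}^{2l+2}\psi_i(J)$, then $J\subseteq K_l$.} To see this, take $x\in J$ and choose indices inductively. Pick $\sigma_1$ with $x\in\psi_{\sigma_1}(J)$, then $\sigma_2$ with $\psi_{\sigma_1}^{-1}(x)\in\psi_{\sigma_2}(J)$, and so on. This gives $x\in\psi_{\sigma_1\dots\sigma_n}(J)$ for every $n$. Since $\operatorname{diam}\psi_{\sigma_1\dots\sigma_n}(J)\to0$ and $\psi_{\sigma_1\dots\sigma_n}(0)\to\pi(\sigma)$, we get $x=\pi(\sigma)\in K_l$. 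So it suffices to exhibit a finite union of closed intervals $J\supseteq I$ that covers itself under $\Psi_l$.

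The endpoints of $I$ are in $K_l$, and their expansions suggest the structure. We have $\frac{2l}{2l+1}=\sum_i\frac{2l}{(2l+2)^i}$, which is the constant digit $2l$, the largest even digit. We also have $1=\sum_i\frac{2l+1}{(2l+2)^i}$, which is the constant digit $2l+1$, the smallest odd digit. So $I$ sits exactly at the "seam" where the even digits $\{0,\dots,2l\}$ meet the odd digits $\{2l+1,\dots,4l+1\}$.

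Taking $J=I$ alone fails. We have $(2l+2)I=\bigl[2l+\frac{2l}{2l+1},\,2l+2\bigr]$, which has length $1+\frac1{2l+1}$. The pieces $I+d$ with $d\in T_l$ have length $\frac1{2l+1}$ and integer offsets, so they leave gaps. I will therefore enlarge $J$ to a finite union of translates of $I$ by multiples of $\frac1{2l+1}$, chosen among points already known to lie in $K_l$. Good candidates are the images $\psi_i(I)$, sums of the form $\frac{m}{2l+1}+I$ with $\frac{m}{2l+1}$ having an eventually constant expansion, and, using symmetry (Lemma~\ref{Basic properties}(2)), the reflections of these about the midpoint $\frac{4l+1}{2(2l+1)}$.

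The self-covering condition then becomes a combinatorial statement. After multiplying by $(2l+1)(2l+2)$, everything is a statement about integer intervals of length $2l+2$. Concretely, for each component $[a,b]$ of $J$, the interval $(2l+2)[a,b]$ must be covered by $\bigcup_{d\in T_l}(J+d)$. I would try
$$J=\bigcup_{j=0}^{2l}\Bigl(I+\tfrac{j}{2l+1}\Bigr)\cap H,$$
or a suitable subfamily of it, and check the covering separately for the even digits and the odd digits. The key arithmetic fact to exploit is that consecutive admissible digits differ by $2$, except for the single step $2l\to2l+1$. A shift by $\frac{j}{2l+1}$ then has to be absorbed by re-indexing $j$ modulo $2l+1$ together with a carry into the digit.

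The main obstacle is finding the right finite family $J$ and verifying the covering, in particular at the seam between $2l$ and $2l+1$. If the guess above does not close up, I would work with second-level digits $T_l+(2l+2)T_l$ (the IFS $\Psi_l^2$, which has the same attractor). I would look there for a run of consecutive integers long enough that $[\alpha,\beta]\subseteq\bigcup_{d\in D}\frac{[\alpha,\beta]+d}{(2l+2)^2}$ holds for $[\alpha,\beta]=I$ directly.
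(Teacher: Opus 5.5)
\textbf{The plan cannot be completed as stated.} Your reduction is correct: a compact $J$ with $J\subseteq\bigcup_i\psi_i(J)$ lies in $K_l$. But you never produce $J$, and the object you are searching for does not exist.

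Your explicit candidate $\bigcup_{j=0}^{2l}\bigl(I+\frac{j}{2l+1}\bigr)\cap H$ is just the single interval $\bigl[\frac{2l}{2l+1},\frac{4l+1}{2l+1}\bigr]$. It contains the gap $\bigl(\max\psi_{l+2}(H),\min\psi_{l+3}(H)\bigr)$ of $K_l$, so it cannot cover itself.

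More fundamentally, no finite union of closed intervals $J\supseteq I$ covers itself under $\Psi_l$:
\begin{itemize}
\item Such a $J$ would lie in $K_l\subseteq H$. Only $\psi_{l+1}(H)$ and $\psi_{l+2}(H)$ meet $I$, so for $x\in I$ and $y=(2l+2)x-2l$ you need $y\in J$ or $y-1\in J$.
\item Let $G_0=\bigl(\max\psi_l(H),\min\psi_{l+1}(H)\bigr)$ and $G_n=\psi_{l+1}^n(G_0)$. These are gaps of $K_l$ accumulating at $\frac{2l}{2l+1}$ from the left.
\item Each $y\in G_n+1\subseteq\bigl(1,\frac{4l+1}{2l+1}\bigr)$ comes from $x=\frac{y+2l}{2l+2}\in\bigl(\frac{2l+1}{2l+2},1\bigr)\subseteq I$. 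Since $y-1\in G_n$ misses $K_l\supseteq J$, you get $G_n+1\subseteq J$ for every $n$.
\item These sets accumulate at $1+\frac{2l}{2l+1}=\max H$. Hence one of the finitely many intervals of $J$ must contain some $[c,\max H]$ with $c<\max H$.
\item That is impossible. The point $\max H$ is the fixed point of $\psi_{2l+2}$, and the gaps $\psi_{2l+2}^n\bigl(\max\psi_{2l+1}(H),\min\psi_{2l+2}(H)\bigr)$ of $K_l$ accumulate at it.
\end{itemize}

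The same reasoning applies to any iterate $\Psi_l^m$. The first letter of a covering word is forced to be $l+1$. Then $\bigcup_{|v|=m-1}\psi_v(J)$ is again a finite union of intervals inside $K_l$, and it would have to contain every $G_n+1$. So your fallback with $J=I$ and second-level digits fails too.

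\textbf{What is needed instead.} Some infinitary information about $K_l$ is unavoidable. The paper obtains it arithmetically. A right-to-left carry algorithm rewrites every finite base-$(2l+2)$ expansion with first digit $2l+1$ as a finite expansion with digits in $T_l$. It uses the fact that $a+2l+2\in T_l$ for each $a\in\{0,\dots,2l+1\}\setminus T_l$. This gives density of $K_l$ in $\bigl[\frac{2l+1}{2l+2},1\bigr]$, so by closedness $K_l$ contains that interval. The symmetry of $K_l$ about $\frac{4l+1}{4l+2}$ then extends the inclusion to all of $I$.
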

\begin{proof}
It is well known that every $a \in [0, 1]$ can be represented as
\begin{equation}
\label{Delta-expansion}
a=\Delta_{a_1 a_2 \dots a_n \dots}^{2l+2}=\sum_{n=1}^{\infty} \frac{a_n}{(2l+2)^n},
\end{equation}
where $(a_i) \in \{ 0, 1, 2, \dots, 2l+1\}^{\mathbb{N}}=A^{\mathbb{N}}$. The representation \eqref{Delta-expansion} is commonly referred to as the $(2l+2)$-expansion of $a$.

We first establish that the set $K_l$ is dense in the interval \( \left[\frac{2l+1}{2l+2}, 1\right] \), i.e.,  
$$
(\forall \varepsilon > 0) \quad \left(\forall a \in \left[\frac{2l+1}{2l+2}, 1\right]\right) \quad (\exists b \in K_l) \ \ \ \text{such that} \ \ \ \quad |a - b| < \varepsilon.
$$  
Observe that for any \( a \in \left[\frac{2l+1}{2l+2}, 1\right] \), there exists an infinite sequence \( (a_i) \in A^{\mathbb{N}} \) with \( a_1 = 2l+1 \) such that  
\[
a = \Delta_{a_1 a_2 \dots a_n \dots}^{2l+2} = \sum_{n=1}^{\infty} \frac{a_n}{(2l+2)^n}.
\]
In turn, for any $\varepsilon > 0$ there exists $k \in \mathbb{N}$ such that $|\Delta^{2l+2}_{a_1 \dots a_k \dots} - \Delta_{a_1 \dots a_k}^{2l+2}|<\varepsilon$. Indeed, since

$$\Delta^{2l+2}_{a_1 \dots a_k \dots} - \Delta_{a_1 \dots a_k}^{2l+2}=\sum_{n=k+1}^{\infty} \frac{a_{n}}{(2l+2)^{n}} \leq \sum_{n=k+1}^{\infty} \frac{2l+1}{(2l+2)^{n}}=\frac{1}{(2l+2)^{k}},$$
we can always take $k>\lceil -\log{\varepsilon} / \log{(2l+2)}\rceil$.

Next, we show that for every finite sequence $(a_i)_{i=1}^{k} \in A^k$ with $a_1=2l+1$ there exists $(b_i)_{i=1}^{k} \in T_l^{k}$ such that

\begin{equation}
\label{Identity}
\sum_{i=1}^{k} \frac{a_i}{(2l+2)^i}=\sum_{i=1}^{k} \frac{b_i}{(2l+2)^i} \in K_l.
\end{equation}

We define $(b_i)_{i=1}^{k} \in T_l^{k}$ using the following algorithm:
\begin{enumerate}
\item Initialization with the last digit:
\[
    b_k =
    \begin{cases}
    a_k, & \text{if } a_k \in T_l, \\
    a_k + 2l + 2, & \text{if } a_k \notin T_l.
    \end{cases}
    \]

\item Recursive assignment for $2 \leq i \leq k-1$ 
\[
    b_i =
    \begin{cases}
    a_i, & \text{if } a_i \in T_l \text{ and } b_{i+1} \in \{ a_{i+1}-1, a_{i+1}\}, \\
    a_i + 2l + 2, & \text{if } a_i \notin T_l \text{ and } b_{i+1} \in \{ a_{i+1}-1, a_{i+1}\}, \\
    2l+1, & \text{if } a_i =0 \text{ and } b_{i+1} \not \in \{ a_{i+1}-1, a_{i+1}\}, \\
    a_i - 1, & \text{if } a_i \notin T_l \setminus \{0, 2l + 1\} \text{ and } b_{i+1} \not \in \{ a_{i+1}-1, a_{i+1}\}, \\
    a_i+2l+1, & \text{if } a_i \in T_l \setminus \{0, 2l + 1\} \text{ and } b_{i+1} \not \in \{ a_{i+1}-1, a_{i+1}\}, \\
    2l, & \text{if } a_i = 2l+1 \text{ and } b_{i+1} \not \in \{ a_{i+1}-1, a_{i+1}\}. \\
    \end{cases}
\]

\item Finally, since $a_1=2l+1$, the first digit of the desired representation can be found as

\[
    b_1 =
    \begin{cases}
    2l + 1, & \text{if } b_{2} \in \{ a_{2}-1, a_{2}\},\\
    2l, & \text{if } b_{2} \not \in \{ a_{2}-1, a_{2}\}.
    \end{cases}
\]

\end{enumerate}
Taking into account that for an arbitrary $a_i \in A \setminus T_l$ 
$$ (a_i+2l+2) \in T_l, \ \ \ \frac{a_i +2l+2}{(2l+2)^i}=\frac{1}{(2l+2)^{i-1}}+\frac{a_i}{(2l+2)^i}$$
we obtain the correctness of \eqref{Identity}. The density of $K_l$ in $\left[ \frac{2l+1}{2l+2}, 1\right]$ implies, by Theorem \ref{Classification}, that this interval is totally contained in $K_l$.

Let us recall that the set $K_l$ is symmetric with respect to its midpoint $\frac{4l+1}{4l+2}$, as is an arbitrary set of subsums of a convergent positive series.
From the inclusion
$$\left[ \frac{4l+1}{4l+2}, 1\right] \subset \left[ \frac{2l+1}{2l+2}, 1\right] \subset K_l$$
it follows that
$$\left[ \frac{4l+1}{4l+2} - \left( 1 - \frac{4l+1}{4l+2}\right), 1\right]=\left[ \frac{2l}{2l+1}, 1\right] \subset K_l.$$
\end{proof}

\begin{corollary}
The set $K_l$ is a Cantorval.
\end{corollary}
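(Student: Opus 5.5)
We will use the classification in Theorem \ref{Classification}. Since $K_l=E(z_n)$ for the multigeometric series $\sum z_n$, it is either a finite union of closed intervals, a Cantor set, or an M-Cantorval. Lemma \ref{LM1} gives $I=\left[\frac{2l}{2l+1},1\right]\subset K_l$, so $K_l$ has nonempty interior and is not a Cantor set. It therefore remains to exclude the possibility that $K_l$ is a finite union of intervals.

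By Theorem \ref{KHM}\eqref{thm:KHM.2}, $K_l$ is a finite union of intervals if and only if $z_n\le Z_n$ for all but finitely many $n$. So we will exhibit infinitely many indices with $z_n>Z_n$. First put the terms in non-increasing order: within each block $i$ the terms are $\frac{2l+1}{(2l+2)^i}$ followed by $l$ copies of $\frac{2}{(2l+2)^i}$. Since $\frac{2}{(2l+2)^i}\ge \frac{2l+1}{(2l+2)^{i+1}}$ is equivalent to $4l+4\ge 2l+1$, the ordering is non-increasing, so the theorems apply.

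Now take $n$ to be the index of the first term $\frac{2l+1}{(2l+2)^i}$ of block $i$. The tail after it is
$$Z_n=\frac{2l}{(2l+2)^i}+\sum_{j>i}\frac{4l+1}{(2l+2)^j}=\frac{2l}{(2l+2)^i}+\frac{4l+1}{(2l+1)(2l+2)^i}.$$
Multiplying by $(2l+2)^i(2l+1)$, the inequality $z_n>Z_n$ becomes $(2l+1)^2>2l(2l+1)+4l+1$, that is, $4l^2+4l+1>4l^2+6l+1$. This fails for $l\ge1$, so these indices do not work.

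Next try the last term $\frac{2}{(2l+2)^i}$ of block $i$. Its tail is $\frac{4l+1}{(2l+1)(2l+2)^i}$, and $2>\frac{4l+1}{2l+1}$ holds. Hence $z_n>Z_n$ for infinitely many $n$. By Theorem \ref{KHM}, $K_l$ is not a finite union of intervals.

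Combining this with the interval from Lemma \ref{LM1}, Theorem \ref{Classification} forces $K_l$ to be an M-Cantorval. The only step needing care is choosing the right index, the last term of each block rather than the leading one, at which the Kakeya inequality is violated. A more geometric alternative would be to note that the point just to the right of $\max\psi_{l+1}(H)$ lies in a gap at every scale, but the series computation above is simpler.
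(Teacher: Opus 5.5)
Your proof is correct and is the paper's own argument. Like the paper, you note that the last term $\frac{2}{(2l+2)^i}$ of each block exceeds its tail $\frac{4l+1}{(2l+1)(2l+2)^i}$, so Theorem \ref{KHM} rules out a finite union of intervals, and then use $I\subset K_l$ from Lemma \ref{LM1} to rule out the Cantor case in Theorem \ref{Classification}. Your explicit check that the terms are non-increasing is a detail the paper omits, and your index for that term (the last one in block $i$, i.e.\ $z_{i(l+1)}$) is more accurate than the paper's label $z_{il}$.
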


\begin{proof}
By definition, $K_l$ coincides with the set of subsums of the series $\sum_{n=1}^{\infty}z_n$ with $0<z_{n+1} \leq z_{n}, n \in \mathbb{N}$. Moreover, 
$$\frac{2}{(2l+2)^i}=z_{il} > Z_{il}=\frac{4l+1}{(2l+1)(2l+2)^i},$$
while $z_n < Z_n$ for each $n \neq il, i \in \mathbb{N}$.
Thus, $\sum_{n=1}^{\infty}z_n$ does not satisfy Kakeya's condition, and hence $K_l$ is either a Cantor set or a Cantorval. Since the interval $I$ is contained in $K_l$, only the latter possibility remains.
\end{proof}

\begin{lemma}
\label{LM3}
$K_l \setminus I$ is a countable union of pairwise disjoint affine copies of $K_l$. Moreover, this union contains $2l$ isometric copies of $K_l/(2l+2)^n$ for each $n \in \mathbb{N}$. 
\end{lemma}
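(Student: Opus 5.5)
The plan is to obtain the decomposition by locating the first-level pieces $\psi_i(K_l)$ relative to $I$, using the endpoint formulas of Lemma~\ref{Basic properties}, and then to iterate the single piece that meets $I$. Set
$$L:=K_l\cap\Bigl[0,\tfrac{2l}{2l+1}\Bigr), \qquad R:=K_l\cap\Bigl(1,\tfrac{4l+1}{2l+1}\Bigr].$$
Then $K_l\setminus I=L\cup R$. By the symmetry of $K_l$ about $\frac{4l+1}{4l+2}$ (Lemma~\ref{Basic properties}), the reflection $x\mapsto \frac{4l+1}{2l+1}-x$ maps $L$ onto $R$. So it suffices to show that $L$ is a disjoint union of affine copies of $K_l$, containing exactly $l$ copies scaled by $(2l+2)^{-n}$ for each $n$. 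The reflected copies then give the other $l$ per level. Since $L$ and $R$ lie on opposite sides of $I$, the two families are automatically disjoint.

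First I will compare endpoints from Lemma~\ref{Basic properties}.
\begin{itemize}
\item For $1\le i\le l$, $\max\psi_i(H)$ is at most $\frac{4l^2+2l-1}{(2l+1)(2l+2)}<\frac{4l^2+4l}{(2l+1)(2l+2)}=\frac{2l}{2l+1}$. Hence $\psi_1(K_l),\dots,\psi_l(K_l)\subset L$.
\item Consecutive intervals $\psi_i(H)$, $\psi_{i+1}(H)$ with $i<l$ are separated by a gap of length $\frac{1}{(2l+1)(2l+2)}$, so these $l$ copies are pairwise disjoint.
\item $\min\psi_{l+2}(H)=\frac{2l+1}{2l+2}>\frac{2l}{2l+1}$, since $(2l+1)^2>2l(2l+2)$, so $\psi_{l+2}(K_l)$ misses $L$.
\item For $i\ge l+3$, $\min\psi_i(H)\ge\frac{2l+3}{2l+2}>1$, so those pieces miss $L$ as well.
\item For $\psi_{l+1}(x)=\frac{x+2l}{2l+2}$, a direct computation gives $\psi_{l+1}(x)<\frac{2l}{2l+1}$ iff $x<\frac{2l}{2l+1}$; that is, $\frac{2l}{2l+1}$ is the fixed point of $\psi_{l+1}$.
\end{itemize}
Together with $K_l=\bigcup_i\psi_i(K_l)$, these facts give the self-referential identity
$$L=\bigcup_{i=1}^{l}\psi_i(K_l)\ \cup\ \psi_{l+1}(L).$$

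Next I will unfold this identity. Iterating $m$ times gives
$$L=\bigcup_{n=1}^{m}\bigcup_{i=1}^{l}\psi_{l+1}^{\,n-1}\psi_i(K_l)\ \cup\ \psi_{l+1}^{m}(L).$$
The remainder $\psi_{l+1}^{m}(L)$ lies in an interval of length $O((2l+2)^{-m})$ ending at the fixed point $\frac{2l}{2l+1}$. Every $x\in L$ satisfies $x<\frac{2l}{2l+1}$, so $x$ is at positive distance from the fixed point and eventually leaves the remainder. Hence
$$L=\bigcup_{n\ge1}\bigcup_{i=1}^{l}\psi_{l+1}^{\,n-1}\psi_i(K_l).$$
Each $\psi_{l+1}^{\,n-1}\psi_i$ is a similarity with ratio $(2l+2)^{-n}$. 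So each such piece is an isometric copy of $K_l/(2l+2)^n$, and there are $l$ of them for each $n$.

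Finally I will check pairwise disjointness, which I expect to be the only step needing care. Within a fixed level $n$, the pieces are images of the pairwise disjoint sets $\psi_1(K_l),\dots,\psi_l(K_l)$ under the injective map $\psi_{l+1}^{\,n-1}$. For different levels it suffices to show that $\psi_i(K_l)$ with $i\le l$ is disjoint from $\psi_{l+1}(L)$, and then apply the injective maps $\psi_{l+1}^{k}$. This holds because $\psi_{l+1}(L)\subset\psi_{l+1}(H)$, which starts at $\frac{2l}{2l+2}$, while $\max\psi_l(H)<\frac{2l}{2l+2}$: the difference is exactly $\frac{1}{(2l+1)(2l+2)}$. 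This yields the countable disjoint union, with $2l$ copies of $K_l/(2l+2)^n$ for each $n\in\mathbb{N}$.
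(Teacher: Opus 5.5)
Your proof is correct and follows essentially the paper's route. The pieces you obtain, $\psi_{l+1}^{\,n-1}\psi_i(K_l)$ with $1\le i\le l$, together with their mirror images, form the same decomposition the paper gets by following the straddling pieces $\psi_{l+1}^{\,n}(K_l)$ and $\psi_{l+2}^{\,n}(K_l)$ as they shrink toward the endpoints of $I$; the mirror images are exactly the paper's $\psi_{l+2}^{\,n-1}\psi_j(K_l)$, $l+3\le j\le 2l+2$, because the reflection conjugates $\psi_i$ to $\psi_{2l+3-i}$. The only difference is bookkeeping: you use the fixed point $\frac{2l}{2l+1}$ of $\psi_{l+1}$ to get the recursion $L=\bigcup_{i\le l}\psi_i(K_l)\cup\psi_{l+1}(L)$ for your set $L$, and symmetry to handle $R$, which makes the exhaustion and cross-level disjointness explicit where the paper only says ``by iterating this process''.
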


\begin{proof}
Since $K_l$ is the attractor of $\Psi_l$, we have
$$K_l=\bigcup_{i=1}^{2l+2} \psi_i(K_l) \subseteq H.$$
Taking into account the basic properties of $\Psi_l$ from Lemma \ref{Basic properties}, we get
$$\psi_i(H) \cap I = \emptyset ~ ~ ~ \text{for all} ~ ~ ~ i \in \{1, 2, \dots, l,l+3, \dotsm, 2l+2\},$$
$$\psi_i(H) \cap \psi_j(H) = \emptyset ~ ~ ~ \text{for all} ~ ~ ~ i \in \{1, 2, \dots, l,l+3, \dotsm, 2l+2\}, 1 \leq j \leq 2l+2, i \neq j.$$
It implies that $\psi_i(K_l)$, for $i \in \{1, 2, \dots, l,l+3, \dots, 2l+2\}$, are pairwise disjoint affine copies of $K_l$ with similarity ratio $1/(2l+2)$ that are totally contained in $K_l \setminus I$. At the same time,
$$\emptyset \neq \psi_{l+1}(H) \cap \psi_{l+2}(H) \subset I, \ \ \ \text{but} \ \ \ \psi_{l+1}(H) \nsubseteq I, \ \ \ \psi_{l+2}(H) \nsubseteq I.$$

Let us examine $\psi_{l+1}(K_l)$ and $\psi_{l+2}(K_l)$ in more detail. Taking into account self-similarity of $K_l$, we can write
$$\psi_{l+1}(K_l)=\bigcup_{i=1}^{2l+2} \psi_{l+1} \circ \psi_{i}(K_l), \ \ \ \psi_{l+2}(K_l)=\bigcup_{i=1}^{2l+2} \psi_{l+2} \circ \psi_{i}(K_l).$$
For $\psi_{l+1}(K_l)$, by using Lemma \ref{Basic properties}, we get
$$\psi_{l+1} \circ \psi_{i} (H) \subset I \ \ \ \text{for} \ \ \ l+2 \leq i \leq 2l+2,$$
$$\psi_{l+1} \circ \psi_{i} (H) \cap I = \emptyset \ \ \ \text{for} \ \ \ 1 \leq i \leq l,$$
$$\psi_{l+1} \circ \psi_{l+1} (H) \cap I \neq \emptyset, \ \ \ \text{but} \ \ \ \psi_{l+1} \circ \psi_{l+1} (H) \not\subseteq I,$$
$$\psi_{l+1} \circ \psi_{i} (H) \cap \psi_{l+1} \circ \psi_{j} (H) = \emptyset \ \ \  \text{for} \ \ \ 1 \leq i \leq l, 1 \leq j \leq 2l+2, i \neq j.$$
A symmetric situation holds for $\psi_{l+2}(K_l)$, where
$$\psi_{l+2} \circ \psi_{i} (H) \subset I \ \ \ \text{for} \ \ \ 1 \leq i \leq l+1,$$
$$\psi_{l+2} \circ \psi_{i} (H) \cap I = \emptyset \ \ \ \text{for} \ \ \ l+3 \leq i \leq 2l+2,$$
$$\psi_{l+2} \circ \psi_{l+2} (H) \cap I \neq \emptyset, \ \ \ \text{but} \ \ \ \psi_{l+2} \circ \psi_{l+2} (H) \not\subseteq I,$$
$$\psi_{l+2} \circ \psi_{i} (H) \cap \psi_{l+2} \circ \psi_{j} (H) = \emptyset \ \ \  \text{for} \ \ \ l+3 \leq i \leq 2l+2, 1 \leq j \leq 2l+2, i \neq j.$$
It means that $\psi_{l+1} \circ \psi_i (K_l)$ for $1 \leq i \leq l$, and $\psi_{l+2} \circ \psi_j (K_l)$ for $l+3 \leq j \leq 2l+2$, are pairwise disjoint affine copies of $K_l$ with similarity ratio $1/(2l+2)^2$ that are totally contained in $K_l \setminus I$.

By iterating this process for $n$, we can show that
$$\underbrace{\psi_{l+1} \circ \dots \circ \psi_{l+1}}_n \circ \psi_{i} (H) \subset I \ \ \ \text{for} \ \ \  l+2 \leq i \leq 2l+2,$$
$$\underbrace{\psi_{l+1} \circ \dots \circ \psi_{l+1}}_n \circ \psi_{i} (H) \cap \underbrace{\psi_{l+1} \circ \dots \circ \psi_{l+1}}_n \circ \psi_{j} (H) = \emptyset \ \ \  \text{for} \ \ \ 1 \leq i \leq l, i \neq j.$$
$$\underbrace{\psi_{l+1} \circ \dots \circ \psi_{l+1}(H)}_{n+1} \cap I \neq \emptyset, \ \ \ \text{but} \ \ \  \underbrace{\psi_{l+1} \circ \dots \circ \psi_{l+1}(H)}_{n+1} \nsubseteq I,$$
and
$$\underbrace{\psi_{l+2} \circ \dots \circ \psi_{l+2}}_n \circ \psi_{i} (H) \subset I \ \ \ \text{for} \ \ \ 1 \leq i \leq l+1,$$
$$\underbrace{\psi_{l+2} \circ \dots \circ \psi_{l+2}}_n \circ \psi_{i} (H) \cap \underbrace{\psi_{l+2} \circ \dots \circ \psi_{l+2}}_n \circ \psi_{j} (H) = \emptyset \ \ \  \text{for} \ \ \ l+3 \leq i \leq 2l+2, i \neq j.$$
$$\underbrace{\psi_{l+2} \circ \dots \circ \psi_{l+2}(H)}_{n+1} \cap I \neq \emptyset, \ \ \ \text{but} \ \ \  \underbrace{\psi_{l+2} \circ \dots \circ \psi_{l+2}(H)}_{n+1} \nsubseteq I.$$
Moreover, we observe that for an arbitrary $x \in K_l$
$${\underbrace{\psi_{l+1} \circ \dots \circ \psi_{l+1}}_n \circ \psi_{i} (x) \uparrow \frac{2l}{2l+1}}\ \ \ \text{and} \ \ \ {\underbrace{\psi_{l+2} \circ \dots \circ \psi_{l+2}}_n \circ \psi_{i} (x) \downarrow 1} \ \ \ \text{as} \ \ \ n \rightarrow \infty.$$

This means that $K_l \setminus \left[ \frac{2l}{2l+1}, 1\right]$ consist of a countable union of pairwise disjoint affine copies of itself having the form
$$\underbrace{\psi_{l+1} \circ \dots \circ \psi_{l+1}}_n \circ \psi_{i} (K_l), \ \ \ \text{for each} \ \ \ n \in \mathbb{N}_0, \ i \in \{1, \dots, l \},$$
and
$$\underbrace{\psi_{l+2} \circ \dots \circ \psi_{l+2}}_n \circ \psi_{j} (K_l), \ \ \ \text{for each} \ \ \ n \in \mathbb{N}_0, \ j \in \{l+3, \dots, 2l+2 \}.$$

\end{proof}

\newpage

The structure of $K_l$ can be illustrated in the following picture:

\begin{figure}[h]
\center{\includegraphics[scale=0.45]{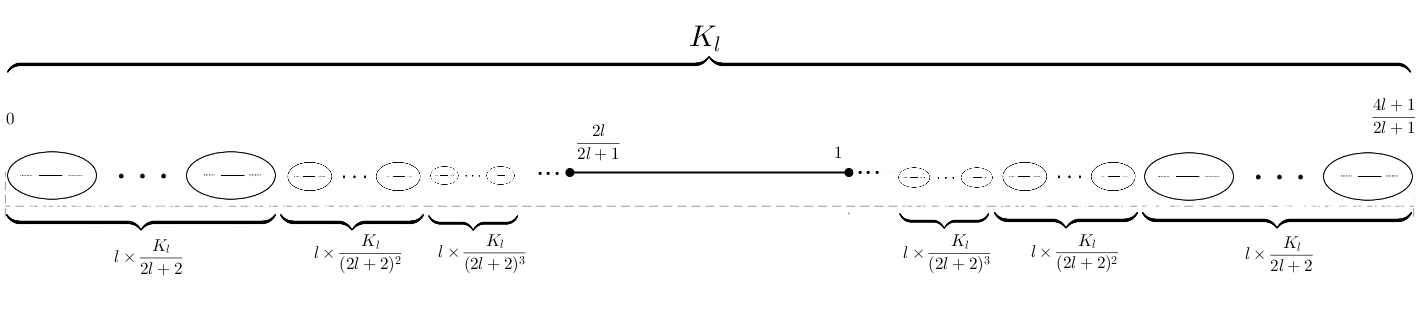}}
\end{figure}

\begin{theorem}
\label{MT}
The Lebesgue measure of the Cantorval $K_l$ is equal to 1, while its boundary has a non-integer Hausdorff dimension equal to $~ \log(2l+1)/\log(2l+2)$.
\end{theorem}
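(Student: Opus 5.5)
We will use the decomposition of Lemma \ref{LM3}: $K_l = I \cup \bigcup_{n\ge 0} \bigcup (\text{$2l$ disjoint copies of } K_l \text{ scaled by } (2l+2)^{-(n+1)})$. The index ranges $i\in\{1,\dots,l\}$ and $j\in\{l+3,\dots,2l+2\}$ give $l+l=2l$ copies at each level.

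\textbf{Lebesgue measure.} Write $\lambda=\lambda(K_l)$. The pieces in Lemma \ref{LM3} are pairwise disjoint and disjoint from $I$. Each piece has measure $\lambda/(2l+2)^{n+1}$, and $\lambda(I)=1/(2l+1)$. Additivity therefore gives
$$\lambda=\frac{1}{2l+1}+2l\lambda\sum_{n=1}^{\infty}\frac{1}{(2l+2)^{n}}=\frac{1}{2l+1}+\frac{2l}{2l+1}\lambda,$$
so $\lambda/(2l+1)=1/(2l+1)$ and $\lambda=1$. The linear equation is legitimate because $\lambda\le|H|<\infty$. As a cross-check, the similarity dimension of $\Psi_l$ is $1$, so $\lambda\le 1$ follows directly from $(2l+2)$ maps of ratio $1/(2l+2)$; the equation itself, however, gives equality outright.

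\textbf{Boundary.} By the same decomposition, $\partial K_l$ contains the endpoints of $I$ and is the union of the boundaries of the copies. The endpoints $\frac{2l}{2l+1}$ and $1$ are limit points of the copies, and every other point of $I$ is interior. Each copy lies in an open set avoiding the others and $I$, except for accumulation at the two endpoints. So up to a countable set, $\partial K_l=\bigcup \phi(\partial K_l)$ over the countably many copy maps $\phi$. Hausdorff dimension is countably stable and bi-Lipschitz invariant, so this identity alone does not determine the dimension. We need a genuine self-similar description.

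We use the level-one structure instead. The maps $\psi_i$ with $i\ne l+1,l+2$ give $2l$ disjoint copies of $K_l$. Write $B=\partial K_l$. Then $B$ consists of the images $\psi_i(B)$ for these $2l$ indices, together with $B\cap\psi_{l+1}(H)$ and $B\cap\psi_{l+2}(H)$ taken outside $I$. Iterating $\psi_{l+1}$ and $\psi_{l+2}$, as in Lemma \ref{LM3}, shows that these remaining parts are the images of $B$ under the words $\psi_{l+1}^{n}\psi_i$ and $\psi_{l+2}^{n}\psi_j$. These words form an infinite IFS with ratios $(2l+2)^{-(n+1)}$, $2l$ maps at each level, and its images are disjoint. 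So $B$ is, up to countably many points, the attractor of this infinite conformal IFS satisfying the OSC. Its dimension $s$ is determined by the pressure equation
$$2l\sum_{n=1}^\infty (2l+2)^{-ns}=1 \iff \frac{2l}{(2l+2)^s-1}=1 \iff (2l+2)^s=2l+1,$$
which gives $s=\log(2l+1)/\log(2l+2)$.

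To avoid infinite-IFS machinery, we prove the two bounds directly. For the \emph{upper bound}, level $k$ of this coding has $N_k$ cylinders, each of size $\asymp(2l+2)^{-k}$, and they cover $B$ up to countably many points. The generating function $\sum_k N_k x^k = 1/(1-2l x/(1-x))$ has smallest pole at $x=1/(2l+1)$, so $N_k\asymp(2l+1)^k$. This bounds the box dimension of $B$ by $s$. For the \emph{lower bound}, we restrict to the finite sub-IFS of words with $n\le M$. It satisfies the strong separation condition, so its attractor $B_M\subset B$ has dimension $s_M$ solving $2l\sum_{n\le M}(2l+2)^{-ns}=1$. Then $s_M\uparrow s$ as $M\to\infty$. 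Finally, $s<1$ because $2l+1<2l+2$, so the dimension is non-integer.

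\textbf{Main obstacle.} The hard step is rigorously justifying $B=\bigcup\phi(B)$ with disjoint images, i.e.\ checking that each copy $\phi(K_l)$ meets no other part of $K_l$. Otherwise boundary points of a copy could become interior points of $K_l$. This follows from the disjointness of the hulls $\phi(H)$ from $I$ and from each other in Lemma \ref{LM3}, so every boundary point of a copy is a boundary point of $K_l$. The inclusion $B_M\subset B$ rests on this.
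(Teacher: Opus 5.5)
Your proposal is correct in outline. It shares the paper's core, namely the decomposition of Lemma~\ref{LM3} and the equation $2l\sum_{n\ge 1}(2l+2)^{-ns}=1$, but it extracts both conclusions differently.

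For the measure, the paper lists $I$ and $2l(2l+1)^{n-1}$ copies of $I$ of ratio $(2l+2)^{-n}$ inside $int(K_l)$ and sums their lengths to $1$. That gives $\lambda(K_l)\ge 1$. The reverse inequality rests on the residual set (the boundary) being null, which the paper gets from the dimension count. Your renewal equation $\lambda=\frac{1}{2l+1}+\frac{2l}{2l+1}\lambda$ yields $\lambda(K_l)=1$ in one stroke, independently of the boundary. Combined with the paper's interval count, it even shows $\lambda(fr(K_l))=0$ without any dimension theory.

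For the dimension, the paper appeals to the theory of countable ($N$-)self-similar sets with disjoint hulls, while you prove the two bounds by hand. Your lower bound via finite truncations with strong separation is fine. You also correctly identify hull isolation, $K_l\cap\phi(H)=\phi(K_l)$, as what gives $\phi(B)\subseteq B$ and hence $B_M\subseteq B$.

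Drop the ``cross-check'', though: similarity dimension $1$ does not imply $\lambda(K_l)\le 1$. For example, the IFS $\{x/2,\,x/2+1\}$ has similarity dimension $1$ and attractor $[0,2]$. You never actually use this remark.

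The upper bound, as written, contains a false step. The $N_k=2l(2l+1)^{k-1}$ admissible cylinders of total exponent exactly $k$ do \emph{not} cover $B$ up to a countable set. They miss every point whose cumulative exponents $m_1,\ m_1+m_2,\dots$ skip the value $k$. For instance, at $k=1$ the cylinders $\psi_i(H)$ with $i\notin\{l+1,l+2\}$ miss the uncountable set $\psi_{l+1}\psi_1(B)$.

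The repair is cheap. For every admissible word $u$ of exponent $p<k$, add the two intervals $\psi_u\psi_{l+1}^{k-p}(H)$ and $\psi_u\psi_{l+2}^{k-p}(H)$.
\begin{itemize}
\item Each has length $\frac{4l+1}{2l+1}(2l+2)^{-k}$.
\item Each contains all deeper pieces $\psi_u\psi_{l+1}^{n}\psi_i(K_l)$ with $n\ge k-p$ (respectively their right-hand analogues), because $\psi_{l+1}(H)\subseteq H$.
\item Each also contains the accumulation point $\psi_u(\frac{2l}{2l+1})$ (respectively $\psi_u(1)$), since $\frac{2l}{2l+1}$ and $1$ are the fixed points of $\psi_{l+1}$ and $\psi_{l+2}$.
\end{itemize}
The enlarged family covers all of $B$ and has $N_k+2\sum_{p<k}N_p=O((2l+1)^k)$ members. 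So $\overline{\dim}_B B\le s$ follows as you intended.

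Alternatively, you can bound the $s$-dimensional Hausdorff measure of $B$ directly. Take the stopping-time cover by words whose exponent first reaches or exceeds $k$, and use the fact that $\sum r_\omega^s=1$ over any such cut.
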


\begin{proof}

According to Lemmas \ref{Basic properties}, \ref{LM1}, \ref{LM3}, $int(K_l)$ contains the interval $\left[ \frac{2l}{2l+1}, 1\right]$ as well as $2l(2l+1)^{n-1}$ similar copies of this interval with ratio $1/(2l+2)^n$, for each $n \in \mathbb{N}$. Hence, its Lebesgue measure can be calculated as
$$\lambda(int(K_l))=\frac{1}{2l+1}+\frac{1}{2l+1} \cdot \frac{2l}{(2l+2)}+\frac{1}{2l+1} \cdot \frac{2l(2l+1)}{(2l+2)^2}+ \dots + \frac{1}{2l+1} \cdot \frac{2l(2l+1)^{n-1}}{(2l+2)^{n}}+\dots=$$
$$=\frac{1}{2l+1}+\frac{1}{2l+1} \cdot \frac{\frac{2l}{2l+2}}{1-\frac{2l+1}{2l+2}}=\frac{1}{2l+1}+\frac{2l}{2l+1}=1.$$

Since $fr(K_l)=K_l \setminus int(K_l)$ and $(\frac{2l}{2l+1}, 1) \subset int(K_l)$, by Lemma \ref{LM3}, $fr(K_l)$ consists of a countable union of pairwise disjoint affine copies of itself. Moreover, this union contains $2l$ similar copies of this set with ratio $1/(2l+2)^n$, for each $n \in \mathbb{N}$.
Such sets are referred to as countable self-similar or $N$-self-similar (see \cite{Mor96} and \cite{Pra98}). Since all the copies of the $N$-self-similar set $fr(K_l)$ are pairwise disjoint (even convex hulls of these copies are disjoint), its Hausdorff dimension coincides with the $N$-similarity dimension -- an analogue of similarity dimension. This dimension can be determined as the unique solution to the following equation:
$$2l \cdot \left( \frac{1}{2l+2} \right)^s + 2l \cdot \left( \frac{1}{2l+2} \right)^{2s} + \dots + 2l \cdot \left( \frac{1}{2l+2} \right)^{ns}+\dots= 1.$$
It follows that
$$\frac{(2l+2)^{-s}}{1-(2l+2)^{-s}}=\frac{1}{2l} \ \ \ \Rightarrow \ \ \ (2l+1)(2l+2)^{-s}=1 \ \ \ \Rightarrow \ \ \ s=\log(2l+1)/ \log(2l+2).$$

Hence, $int(K_l)$ is a countable union of open intervals with total length 1, while $fr(K_l)$ is a fractal set with Hausdorff dimension equal to $\log(2l+1)/\log(2l+2)$.
\end{proof}

\begin{corollary}
The Guthrie--Nymann Cantorval (the case $l=1$)
$$K_1=\left\{ \sum_{n=1}^{\infty} \frac{\varepsilon_n}{4^n} : (\varepsilon_n) \in \{0, 2, 3, 5\}^{\mathbb{N}} \right\}$$
has Lebesgue measure $1$, while its boundary is a fractal set whose Hausdorff dimension is equal to $\log{3}/\log{4}$.
\end{corollary}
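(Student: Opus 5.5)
The statement to prove is the final corollary: for $l=1$, $K_1$ has Lebesgue measure $1$ and its boundary has Hausdorff dimension $\log 3/\log 4$. The plan is to specialize Theorem \ref{MT} to $l=1$. The work is in checking that $K_1$ really is the $l=1$ member of the family and that it is the Guthrie--Nymann Cantorval.

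First I identify the digit set. For $l=1$ we have $2l+2=4$, and the set $T_1=\{0,2,\dots,2l,2l+1,2l+3,\dots,4l+1\}$ becomes $\{0,2,3,5\}$. Hence $K_1$ as written in the corollary is exactly the set $K_l$ with $l=1$. The associated IFS $\Psi_1$ consists of
\[
\psi_1(x)=\frac{x}{4},\quad \psi_2(x)=\frac{x+2}{4},\quad \psi_3(x)=\frac{x+3}{4},\quad \psi_4(x)=\frac{x+5}{4}.
\]
The corresponding multigeometric series is
\[
\frac34+\frac24+\frac3{16}+\frac2{16}+\cdots,
\]
that is, the series with $k_1=3$, $k_2=2$ and $q=1/4$. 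This is precisely the series Guthrie and Nymann used to exhibit their Cantorval, which justifies the name. Lemmas \ref{Basic properties}, \ref{LM1}, \ref{LM3} and the preceding corollary were proved for arbitrary $l\in\mathbb{N}$, so all of them apply at $l=1$. In particular, $K_1$ is a Cantorval containing $I=[2/3,1]$.

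Next I substitute $l=1$ into Theorem \ref{MT}. The measure is $1$, independent of $l$. As a sanity check, the geometric series gives
\[
\lambda(K_1)=\frac13+\frac13\sum_{n\ge1}\frac{2\cdot3^{n-1}}{4^n}=\frac13+\frac23=1.
\]
The boundary dimension is $\log(2l+1)/\log(2l+2)=\log3/\log4$. This can be cross-checked against the $N$-similarity equation $\sum_{n\ge1}2\cdot4^{-ns}=1$, which reduces to $3\cdot 4^{-s}=1$.

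There is no real obstacle. The only point needing care is whether the formula in Lemma \ref{Basic properties} and the disjointness claims in Lemma \ref{LM3} degenerate for $l=1$, where the index ranges $\{1,\dots,l\}$ and $\{l+3,\dots,2l+2\}$ each contain a single element. I would recheck these directly: the convex hull is $H=[0,5/3]$, and
\[
\psi_1(H)=[0,5/12],\quad \psi_2(H)=[1/2,11/12],\quad \psi_3(H)=[3/4,7/6],\quad \psi_4(H)=[5/4,5/3].
\]
Here $\psi_1(H)$ and $\psi_4(H)$ are disjoint from $I$ and from the other images, while $\psi_2(H)\cap\psi_3(H)=[3/4,11/12]\subset I$. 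This confirms the lemmas' hypotheses hold at $l=1$.
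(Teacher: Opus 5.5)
Your proposal is correct and follows the paper's route: the corollary is obtained simply by substituting $l=1$ into Theorem~\ref{MT}. The identification $T_1=\{0,2,3,5\}$ and the series $\frac{3}{4}+\frac{2}{4}+\frac{3}{16}+\frac{2}{16}+\cdots$ show that $K_1$ is the Guthrie--Nymann Cantorval, and your explicit checks at $l=1$ (the maps $\psi_i$, the intervals $\psi_i(H)$, the measure series summing to $1$, and the equation $3\cdot 4^{-s}=1$) are all accurate.
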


\section{Final remarks and observations}

It is easy to see that the similarity dimension of the IFS $\Psi_l=\{ \psi_i(x)\}_{i=1}^{2l+2}$ is equal to $1$. Since its attractor $K_l$ has positive Lebesgue measure, it must satisfy the OSC by Theorem~\ref{T:Schief}. The interior of $K_l$ is a countable union of open intervals that satisfies:
\begin{enumerate}
\item $\psi_i(int(K_l)) \subset int(K_l)$,
\item $\psi_i(int(K_l)) \cap \psi_j(int(K_l)) = \emptyset, i \neq j,$
\end{enumerate}
and serves as the open set $V$ in the definition of the OSC. The example demonstrates that even in this seemingly simple case, when an IFS satisfies the OSC, its attractor can have an intricate structure.

We have established metric and fractal properties only for a rather narrow
class of self-similar Cantorvals associated with convergent positive series.
A natural question arises: can analogous results be derived for a more general
class of Cantorvals, for instance those considered in \cite{Ferdinands}
or \cite{Bartoszewicz}? Another question that deserves special attention concerns the Hausdorff
dimension of the boundary of Cantorvals that arise as attractors of IFS
or as sets of subsums of numerical series. Is it possible for the boundary to have an integer fractal dimension?
Can the boundary be a fat Cantor set, that is, have positive Lebesgue measure?
To the best of our knowledge, these problems remain open.

\end{document}